\newtheorem{theorem}{Theorem}[section]
\newtheorem{lemma}[theorem]{Lemma}
\newtheorem{corollary}[theorem]{Corrolary}
\newtheorem{remark}[theorem]{Remark}
\begin{document}

\title{A new closed formula for the Hermite interpolating polynomial with
applications on the spectral decomposition of a matrix}
\author{Aristides I. Kechriniotis \thanks{%
Department of Informatics, Technological Educational Institute of Lamia,
35100 Lamia, Greece} \and Konstantinos K. Delibasis \thanks{%
University of Central Greece, Dept. of Biomedical Informatics, Greece} \and %
Christos Tsonos \thanks{%
Department of Electronics, Technological Educational Institute of Lamia,
35100 Lamia, Greece} \and Nicholas Petropoulos \thanks{%
Department of Electronics, Technological Educational Institute of Lamia,
35100 Lamia, Greece}}
\date{December 19, 2011}
\maketitle

\begin{abstract}
We present a new closed form for the interpolating polynomial of the general
univariate Hermite interpolation that requires only calculation of
polynomial derivatives, instead of derivatives of rational functions. This
result is used to obtain a new simultaneous polynomial division by a common
divisor over a perfect field. The above findings are utilized to obtain a
closed formula for the semi--simple part of the Jordan decomposition of a
matrix. Finally, a number of new identities involving polynomial derivatives
are obtained, based on the proposed simultaneous polynomial division. The
proposed explicit formula for the semi--simple part has been implemented
using the Matlab programming environment.
\end{abstract}


\section{Introduction}

\numberwithin{equation}{section}

The Hermite interpolation of total degree is described in the following
Theorem \cite{Berezin-Zhidkov}:

\begin{theorem}
\label{thm:1.1} Given $n$ distinct elements $\lambda _{0},\lambda
_{1},\ldots,\lambda _{n-1}$ in a perfect field $\Bbbk$, positive integers~$%
m_{i}$, $i=0,\ldots,n-1$, and $a_{ij}\in \Bbbk$ for $0\leq i\leq n-1,\ 0\leq
j\leq m_{i}-1,$ then there exists one and only one polynomial $r\in \Bbbk [x]
$ of degree less than $\sum_{i=0}^{n-1}m_{i}$, such that 
\begin{equation}
r^{(j)}(\lambda _{i}) =a_{ij},~0\leq j\leq m_{i}-1,~0\leq i\leq n-1.
\end{equation}
This polynomial $r$ is explicitly given by, 
\begin{eqnarray*}
r(x)&=&\sum_{i=0}^{n-1}\sum_{j=0}^{m_{i}-1}\sum_{k=0}^{m_{i}-j-1}a_{ij}\frac{%
1}{ j!}\frac{1}{k!}\left[\frac{(x-\lambda _{i}) ^{m_{i}}}{\Omega(x) }\right]%
_{x=x_{i}}^{(k) } \\
&&\times \frac{\Omega (x) }{(x-\lambda _{i})^{m_{i}-j-k}},
\end{eqnarray*}
where 
\begin{equation*}
\Omega(x) =\prod_{i=0}^{n-1}(x-\lambda _{i})^{m_{i}},
\end{equation*}
and $r^{(k)}(a) $ is the $k-$th derivative of $r$ at $a$.
\end{theorem}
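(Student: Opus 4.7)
The plan is to split the theorem into two parts: uniqueness (along with a dimension count that will yield existence abstractly), and then verification that the stated formula actually gives the interpolant, by constructing it from explicit ``Hermite basis'' polynomials.

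For uniqueness, I would argue as follows. The space $V$ of polynomials in $\Bbbk[x]$ of degree less than $N := \sum_{i=0}^{n-1} m_i$ is a $\Bbbk$-vector space of dimension $N$, and the evaluation map $r\mapsto (r^{(j)}(\lambda_i))_{0\le i\le n-1,\,0\le j\le m_i-1}$ sends $V$ linearly to $\Bbbk^N$. If $r\in V$ is in the kernel, then $r$ vanishes to order $\ge m_i$ at each $\lambda_i$, so $(x-\lambda_i)^{m_i}\mid r$ for every $i$; since the $\lambda_i$ are distinct, $\Omega(x)\mid r$, and since $\deg r<\deg\Omega=N$ this forces $r=0$. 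The kernel being trivial between finite-dimensional spaces of equal dimension gives existence and uniqueness simultaneously. (Here one uses that $\Bbbk$ is perfect only to the extent that the field is infinite enough to make distinctness of the $\lambda_i$ meaningful; the argument is purely linear.)

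For the explicit formula, I would construct, for each admissible pair $(i,j)$, a Hermite basis polynomial $L_{ij}\in V$ characterized by $L_{ij}^{(\ell)}(\lambda_k)=j!\,\delta_{ik}\delta_{j\ell}$ for all admissible $(k,\ell)$. Granting such $L_{ij}$, the polynomial $r(x)=\sum_{i,j}\tfrac{a_{ij}}{j!}L_{ij}(x)$ then solves the interpolation problem, and by uniqueness it must coincide with any other solution. To build $L_{ij}$, set $\phi_i(x):=\Omega(x)/(x-\lambda_i)^{m_i}=\prod_{\ell\ne i}(x-\lambda_\ell)^{m_\ell}$, which is a polynomial with $\phi_i(\lambda_i)\ne 0$. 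The vanishing conditions at $\lambda_k$ for $k\ne i$ force $\phi_i(x)\mid L_{ij}(x)$, and the vanishing to order $j$ at $\lambda_i$ forces $(x-\lambda_i)^j\mid L_{ij}(x)$, so we look for
\[
L_{ij}(x)=\phi_i(x)(x-\lambda_i)^j p_{ij}(x),
\]
with $p_{ij}$ a polynomial of degree at most $m_i-j-1$ (to keep $\deg L_{ij}<N$). The remaining derivative conditions at $\lambda_i$ amount, via Leibniz, to requiring the Taylor expansion of $\phi_i(x)p_{ij}(x)$ around $\lambda_i$ to equal $1$ modulo $(x-\lambda_i)^{m_i-j}$; since $\phi_i(\lambda_i)\ne 0$, this uniquely determines $p_{ij}$ as the degree-$(m_i-j-1)$ Taylor polynomial of $1/\phi_i$ at $\lambda_i$, namely $p_{ij}(x)=\sum_{k=0}^{m_i-j-1}\tfrac{1}{k!}[1/\phi_i]^{(k)}_{x=\lambda_i}(x-\lambda_i)^k$. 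Substituting $1/\phi_i(x)=(x-\lambda_i)^{m_i}/\Omega(x)$ and $\phi_i(x)(x-\lambda_i)^{j+k}=\Omega(x)/(x-\lambda_i)^{m_i-j-k}$ reproduces exactly the closed form claimed in the statement.

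The main obstacle is the verification that the $L_{ij}$ constructed this way really satisfy $L_{ij}^{(\ell)}(\lambda_i)=j!\,\delta_{j\ell}$ for all $0\le\ell\le m_i-1$. This is the only non-automatic step: one writes $L_{ij}(x)=(x-\lambda_i)^j\,g_{ij}(x)$ with $g_{ij}(x)=\phi_i(x)p_{ij}(x)$, applies the Leibniz rule
\[
L_{ij}^{(\ell)}(\lambda_i)=\sum_{s=0}^{\ell}\binom{\ell}{s}\bigl[(x-\lambda_i)^j\bigr]^{(s)}_{x=\lambda_i}\,g_{ij}^{(\ell-s)}(\lambda_i),
\]
observes that $[(x-\lambda_i)^j]^{(s)}(\lambda_i)$ vanishes unless $s=j$ (within the range $s\le\ell\le m_i-1$, automatically $s\le j$ would be required for nonvanishing, forcing $s=j$ and $\ell\ge j$), and uses the defining property $g_{ij}(x)\equiv 1\pmod{(x-\lambda_i)^{m_i-j}}$ to evaluate $g_{ij}^{(\ell-j)}(\lambda_i)$. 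This produces $j!$ when $\ell=j$ and $0$ when $j<\ell\le m_i-1$, and the case $\ell<j$ is immediate from the factor $(x-\lambda_i)^j$. Checking the vanishing at $\lambda_k$, $k\ne i$, is immediate from the factor $\phi_i(x)$ having a zero of order $m_k$ there. Everything else is bookkeeping.
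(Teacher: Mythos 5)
Your proof is essentially correct, but note that the paper itself offers no proof of Theorem~\ref{thm:1.1}: it is quoted as known background from \cite{Berezin-Zhidkov}, so there is no in-paper argument to compare against. The nearest analogue is the paper's proof of Theorem~\ref{thm:2.3}, which establishes a \emph{different} closed form by directly differentiating the candidate polynomial (Lemma~\ref{lem:2.2}) and inverting the unit lower-triangular matrices $\Lambda_j$. Your route is the classical one: existence and uniqueness by the kernel/dimension count, then construction of Hermite basis polynomials $L_{ij}=\phi_i(x)(x-\lambda_i)^j p_{ij}(x)$ with $p_{ij}$ the truncated Taylor expansion of $1/\phi_i$ at $\lambda_i$, followed by substitution to recover the stated formula. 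Your Leibniz verification that $\phi_i p_{ij}\equiv 1 \pmod{(x-\lambda_i)^{m_i-j}}$ forces $L_{ij}^{(\ell)}(\lambda_k)=j!\,\delta_{ik}\delta_{j\ell}$ is sound, and your derivation makes transparent exactly why the classical formula involves Taylor coefficients of the rational functions $(x-\lambda_i)^{m_i}/\Omega(x)$ --- which is precisely the computational cost that the paper's Section~2 is designed to eliminate.

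One caveat, which is really a defect of the statement over a general perfect field rather than of your argument alone: both your kernel computation (``$r^{(j)}(\lambda_i)=0$ for $j<m_i$ implies $(x-\lambda_i)^{m_i}\mid r$'') and the factors $1/j!$, $1/k!$ in the formula require the factorials up to $(\max_i m_i-1)!$ to be invertible in $\Bbbk$. Over $\Bbbk=\mathbb{F}_2$ (a perfect field) with $n=1$ and $m_0=3$, the polynomial $r=x^2$ has degree less than $3$ and satisfies $r(0)=r'(0)=r''(0)=0$, so the evaluation map has nontrivial kernel and the theorem as stated fails. Your parenthetical claim that perfectness is needed only to ``make distinctness of the $\lambda_i$ meaningful'' is therefore off the mark: perfect fields may be finite, and the hypothesis actually needed is characteristic $0$, or characteristic exceeding $\max_i m_i-1$.
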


The applications of hermite interpolation to numerical analysis is well
known. In this paper we show the use of a computationally more efficient
formula for the Hermite interpolation to a number of algebraic applications.

A number of forms of the interpolating polynomial $r(x)$ have been reported
in the literature, however they all require calculation of derivatives of
rational polynomial functions (e.g. \cite{Berezin-Zhidkov}), or recursive
calculation of the coefficients $a_{ij}$ of Theorem~\ref{thm:1.1} (e.g. \cite%
{Sakai-Vertesi}). We firstly propose a new closed form of the interpolating
polynomial $r$ of the univariate Hermite interpolation that requires only
calculation of polynomial derivatives instead of derivatives of rational
functions, as required in Theorem~\ref{thm:1.1}, thus achieving a
substantial computational acceleration compared to the Theorem~\ref{thm:1.1}%
, without imposing any restrictions in the generality of the interpolation.
We then exploit this result to show a Theorem for the calculation of the
remainders of the simultaneous division of a number of polynomials by a
separable polynomial divisor, as derivatives of a unique polynomial. Namely,
let us note that the remainder $r \in \Bbbk[x] $ of the Euclidean division
of any polynomial $P \in \Bbbk[x] $ of degree $n$ by a separable polynomial $%
Q\in\Bbbk[x] $ of degree $m$, where $n\geq m$, can be calculated in closed
form using the Langrange interpolation formula as following: 
\begin{equation*}
r(x) = \sum_{i=1}^{m} P(\lambda _{i}) \prod_{\substack{ j=1  \\ j\neq i}}^{m}%
\frac{(x-\lambda _{j})} {(\lambda_{i}-\lambda _{j})},
\end{equation*}
where $\lambda _{1},\ldots,\lambda _{m}$ are the roots of $Q$ in the
algebraic closure $\overline{\Bbbk}$ of $\Bbbk$. In this work we extent this
simplified idea of polynomial remainder calculation by Langrange
interpolation, to achieve simultaneous polynomial division by a common
separable divisor, using the closed form of the interpolating polynomial of
the univariate Hermite interpolation $r$ that we propose.

Furthermore, the above results will be used to obtain a new closed formula
for the semi--simple part of the Jordan decomposition of an algebraic
element in an arbitrary algebra. If $\Bbbk $ is a perfect field and $\mathbf{%
A}$ an algebra over $\Bbbk $ with unit $1$, then the well--known result on
the Jordan decomposition, (e.g. \cite{Bourbaki, Hoffman-Kunze, Mulders}) is
presented in the following theorem.

\begin{theorem}
\label{thm:1.2} For each algebraic $A\in \mathbf{A}$ there exist unique $%
S_{A},N_{A}\in \Bbbk[\mathbf{A}] $ such that $A=S_{A}+N_{A}$, $S_{A}$ is
semi--simple and $N_{A}$ is nilpotent.
\end{theorem}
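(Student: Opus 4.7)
The plan is to invoke Theorem~\ref{thm:1.1} to construct the semi--simple summand $S_{A}$ explicitly as a polynomial in $A$ with $\Bbbk$-coefficients. Let $\mu\in\Bbbk[x]$ denote the minimal polynomial of $A$. Over the algebraic closure I factor $\mu(x)=\prod_{i=0}^{n-1}(x-\lambda_{i})^{m_{i}}$ with distinct roots $\lambda_{i}$, and then apply Theorem~\ref{thm:1.1} over $\overline{\Bbbk}$ with the interpolation data
\[
r(\lambda_{i})=\lambda_{i},\qquad r^{(j)}(\lambda_{i})=0\quad(1\le j\le m_{i}-1,\ 0\le i\le n-1),
\]
to obtain a unique polynomial $r$ of degree less than $\deg\mu$.

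The main obstacle is to show that $r$ actually lies in $\Bbbk[x]$, and this is where the perfectness of $\Bbbk$ is used essentially. I would argue by Galois descent: the group $G=\operatorname{Gal}(\overline{\Bbbk}/\Bbbk)$ permutes the roots $\lambda_{i}$ and preserves the multiplicities $m_{i}$ (since $\mu\in\Bbbk[x]$), so for any $\sigma\in G$ the conjugate polynomial $r^{\sigma}$ obtained by applying $\sigma$ to the coefficients of $r$ satisfies exactly the same Hermite interpolation system after the induced relabelling of nodes. The uniqueness clause of Theorem~\ref{thm:1.1} then forces $r^{\sigma}=r$, and since $\Bbbk$ is perfect we have $\overline{\Bbbk}^{G}=\Bbbk$, hence $r\in\Bbbk[x]$. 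I set $S_{A}:=r(A)$ and $N_{A}:=A-r(A)$; both elements lie in $\Bbbk[\mathbf{A}]$.

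It remains to verify the two required properties. The derivative conditions yield $r(x)\equiv\lambda_{i}\pmod{(x-\lambda_{i})^{m_{i}}}$, which gives both $(x-\lambda_{i})^{m_{i}}\mid r(x)-\lambda_{i}$ and $(x-\lambda_{i})^{m_{i}}\mid (x-r(x))^{m_{i}}$. Putting $M=\max_{i}m_{i}$ and using pairwise coprimality of the factors, I conclude $\mu(x)\mid (x-r(x))^{M}$ and $\mu(x)\mid\prod_{i}(r(x)-\lambda_{i})=\pi(r(x))$, where $\pi(y):=\prod_{i}(y-\lambda_{i})$ is the separable radical of $\mu$ and belongs to $\Bbbk[y]$ by the same Galois argument applied to the data $\lambda_{i}\mapsto 0$. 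Evaluating at $A$ yields $N_{A}^{M}=0$, proving nilpotency, and $\pi(S_{A})=0$, so the minimal polynomial of $S_{A}$ divides the separable polynomial $\pi$, making $S_{A}$ semi--simple.

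For uniqueness I would suppose $A=S+N=S'+N'$ with all four terms in $\Bbbk[\mathbf{A}]$, $S,S'$ semi--simple and $N,N'$ nilpotent. Being polynomials in the single element $A$, these elements commute pairwise, so $S-S'=N'-N$ is simultaneously a sum of commuting nilpotents (hence nilpotent) and an element of the commutative $\Bbbk$-subalgebra generated by the commuting semi--simples $S,S'$ (which is a product of separable field extensions of $\Bbbk$, so every element in it is semi--simple). Any element that is both semi--simple and nilpotent has minimal polynomial equal to $x$ and therefore vanishes, forcing $S=S'$ and $N=N'$.
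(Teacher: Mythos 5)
Your proof is correct in substance, but note that the paper itself offers no proof of Theorem~\ref{thm:1.2}: it is quoted as a well--known result with citations to Bourbaki and Hoffman--Kunze, so there is no in-paper argument to compare against line by line. What the paper does do, much later (Lemma~\ref{lem:4.1} and Theorem~\ref{thm:4.2}), is construct exactly the polynomial you build --- the Hermite interpolant with data $r(\lambda_i)=\lambda_i$ and vanishing higher-order derivative data --- and verify that $r(A)=S_A$; but that verification \emph{presupposes} the decomposition (it begins by invoking the minimal polynomials of $S_A$ and $N_A$), whereas your argument establishes existence from scratch and also supplies the uniqueness half, which the paper nowhere addresses. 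Your Galois-descent step plays the same role as the paper's descent through a $\Bbbk$-basis of $\Bbbk(\lambda_0,\dots,\lambda_{n-1})$ in the proof of Theorem~\ref{thm:3.1}, and your uniqueness argument (commuting semi--simple elements of $\Bbbk[A]$ over a perfect field generate an algebra that is a product of separable extensions, and a semi--simple nilpotent is zero) is the standard one and is sound. One caveat you inherit from the paper rather than introduce: the inference from $r^{(j)}(\lambda_i)=0$ for $1\le j\le m_i-1$ to $(x-\lambda_i)^{m_i}\mid r(x)-\lambda_i$ requires $j!\ne 0$ in $\Bbbk$ for all $j<m_i$, so in characteristic $p\le\max_i m_i-1$ both this step and Theorem~\ref{thm:1.1} itself (whose formula contains the factors $1/j!$) break down; in characteristic zero, or characteristic exceeding the largest multiplicity, everything you wrote goes through.
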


It should be noted that in the case of $\Bbbk$ being algebraically closed
and $\mathbf{A}$ is a subalgebra of the matrix-algebra $\Bbbk ^{n\times n}$,
then $S_{A}$ is semi--simple if and only if $S_{A}$ is diagonalizable.

Theorem~\ref{thm:1.2} has various interesting applications, such as
efficient computation of high powers of $A$, therefore, efficient algorithms
for the calculation of $S_{A}$ in terms of $A$ are of great importance. A
proof of the existence of $S_{A}$ that is presented in the book of Hoffman
and Kunze \cite{Hoffman-Kunze}, is based on the Newton's method and yields
direct methods for computations. An algorithm, which is essentially based on
these ideas, is given by Levelt \cite{Levelt}. The algorithm of Bourgoyne
and Cushman \cite{Burgoyne-Cushman} is faster, because higher derivatives
are used. In \cite{Schmidt} D. Schmidt has used the Newton's method to
construct the semi--simple part of the Jordan decomposition of an algebraic
element in an arbitrary algebra, showing quadratic convergence of the
algorithm. Another approach uses the partial fractions decomposition of the
reciprocal of the minimal polynomial \cite{Hsieh-Kohno-Sibuya, Sobczyk,
Verde-Star}. These works however, do not provide a closed form formula for $%
S_{A}$ as a polynomial of $A$. An explicit construction of the spectral
decomposition of a matrix using Hermite interpolation appears in \cite%
{Verde-Star}. However, this approach requires the use of Taylor coefficients
of the reciprocal of the matrix minimal polynomial.

In this work, we exploit our proposed simultaneous polynomial division by a
common separable divisor, to obtain a new closed formula for the
semi--simple part $S_{A}$ of the Jordan decomposition of an algebraic
element $A$ in an arbitrary algebra. The proposed closed formula requires
only evaluation of the derivatives of the basic Hermite--like interpolation
polynomials that are associated by the eigenvalues of $A$, up to the maximum
algebraic multiplicity of the roots of the minimal polynomial of $A$, as
well as matrix multiplication operations. The derived closed formula for the
semi--simple part of the Jordan decomposition has been implemented in the
Matlab programming environment. The sourse code is provided and results are
shown from it's executions on examples of matrices taken from the literature.

Furthermore, a number of new identities involving polynomial derivatives are
also shown, based on the the simultaneous polynomial division by a common
separable divisor.

The rest of the paper is organized as follows. In section 2 we present a new
closed form for Hermite interpolation. In section 3 we show a Theorem for
the calculation of the remainders of the simultaneous division of a number
of polynomials by a separable polynomial divisor, as derivatives of a unique
polynomial. In section 4 we present an explicit Formula for $S_{A}$, the
relative source code in MATLAB environment, as well as some numerical
examples from the literature. Finally, in section 5 the main result of
section 3 is generalized and applied to the derivation of some interesting
polynomial formulas.

\section{A new closed form for Hermite interpolation}

Let $\lambda _{0},\lambda _{1},\ldots,\lambda _{n-1\text{ }}$ be distinct
elements in a perfect field $\Bbbk $ and $m_{0},m_{1},\ldots,m_{n-1}$ be
positive integers. Let us denote by $L_{k}$ the polynomials given by 
\begin{equation}  \label{eq:2.1}
L_{k}(x) =\prod_{\substack{ i=0  \\ i\neq k}}^{n-1}\frac{( x-\lambda _{i})
^{m_{i}}}{( \lambda _{k}-\lambda _{i}) ^{m_{i}}}\in \Bbbk [x] .
\end{equation}
Furthernore, we denote by $\Lambda _{k},~0\leq k\leq n-1$ the $m_{k}\times
m_{k}$ lower triangular matrices $\left[ l_{ij}\right] \in \Bbbk
^{m_{k}\times m_{k}},~0\leq i,j\leq m_{k}-1$, given by 
\begin{equation*}
l_{ij}:=\left\{ 
\begin{array}{ccc}
\binom{i}{j}( L_{k}) ^{( i-j) }( \lambda _{k}) & \text{ \ \ \ \ \ if \ \ \ }
& 0\leq j\leq i\leq m_{k}-1 \\ 
0 & \text{ \ \ \ \ if \ \ } & 0\leq i<j\leq m_{k}-1%
\end{array}
\right. ,~
\end{equation*}
where $( L_{k}) ^{( i-j) }( \lambda _{k}) $ is the derivative of order $(
i-j) $ of the polynomial $L_{k}(x) $ at $\lambda _{k}$. Thus $\Lambda _{k}$
has the following representation 
\begin{equation}  \label{eq:2.2}
\left[ 
\begin{array}{cccccc}
\binom{0}{0}L_{k}( \lambda _{k}) & 0 & . & . & . & 0 \\ 
\binom{1}{0}( L_{k}) ^{( 1) }( \lambda _{k}) & \binom{1}{1}L_{k}( \lambda
_{k}) & . & . & . & 0 \\ 
. & . & . &  &  & . \\ 
. & . &  & . &  & . \\ 
. & . &  &  & . & . \\ 
\binom{m_{k}-1}{0}( L_{k}) ^{( m_{k}-1) }( \lambda _{k}) & \tbinom{m_{k}-1}{1%
}( L_{k}) ^{( m_{k}-2) }( \lambda _{k}) & . & . & . & \binom{m_{k}-1}{
m_{k}-1}L_{k}( \lambda _{k})%
\end{array}
\right] .
\end{equation}
\noindent For our purpose the following technical Lemmas are required:

\begin{lemma}
\label{lem:2.1} The matrices $\Lambda _{k},~0\leq k\leq n-1$ are invertible
with $\Lambda_{k}^{-1}=\sum_{i=0}^{m_{k}-1}( I_{m_{k}}-\Lambda _{k}) ^{i}$,
where $I_{m_{k}}$ is the $m_{k}\times m_{k}$ unit matrix.
\end{lemma}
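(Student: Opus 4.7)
The plan is to show that $\Lambda_k$ is unipotent (i.e.\ $I_{m_k} - \Lambda_k$ is nilpotent), at which point the identity for the inverse becomes the terminating geometric series.

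The first step is to compute the diagonal of $\Lambda_k$. By definition, each diagonal entry has the form $\binom{i}{i} L_k(\lambda_k) = L_k(\lambda_k)$, and evaluating the product in \eqref{eq:2.1} at $x = \lambda_k$ gives
\begin{equation*}
L_k(\lambda_k) = \prod_{\substack{i=0 \\ i \neq k}}^{n-1} \frac{(\lambda_k - \lambda_i)^{m_i}}{(\lambda_k - \lambda_i)^{m_i}} = 1,
\end{equation*}
since the $\lambda_i$ are distinct so no denominator vanishes.

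Consequently, from the representation \eqref{eq:2.2}, the matrix $\Lambda_k$ is lower triangular with every diagonal entry equal to $1$. Therefore $N_k := I_{m_k} - \Lambda_k$ is strictly lower triangular. Any strictly lower triangular matrix of size $m_k \times m_k$ is nilpotent with $N_k^{m_k} = 0$, a standard fact that one may verify by induction on the number of nonzero subdiagonals.

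Finally, since $\Lambda_k = I_{m_k} - N_k$ with $N_k^{m_k} = 0$, the telescoping identity
\begin{equation*}
(I_{m_k} - N_k)\sum_{i=0}^{m_k-1} N_k^{\,i} = I_{m_k} - N_k^{m_k} = I_{m_k}
\end{equation*}
(and its companion on the other side) shows at once that $\Lambda_k$ is invertible and
\begin{equation*}
\Lambda_k^{-1} = \sum_{i=0}^{m_k-1} N_k^{\,i} = \sum_{i=0}^{m_k-1} (I_{m_k} - \Lambda_k)^{i},
\end{equation*}
as claimed. There is no real obstacle here; the only non-mechanical observation is that $L_k(\lambda_k) = 1$, which is what forces the unipotent structure.
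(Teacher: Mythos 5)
Your proof is correct and follows the same route as the paper: observe $L_k(\lambda_k)=1$, conclude $\Lambda_k$ is unit lower triangular so $I_{m_k}-\Lambda_k$ is nilpotent of index at most $m_k$, and sum the terminating geometric series. You simply spell out the telescoping step that the paper leaves implicit.
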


\begin{proof}
Clearly, for $0\leq k\leq n-1$ holds $L_{k}( \lambda _{k}) =1.$
Therefore, all matrices $\Lambda _{k}$, $0\leq k\leq n-1$ are invertible and
unit diagonal lower triangular. Therefore, it follows $(
I_{m_{k}}-\Lambda _{k}) ^{m_{k}}=0$ and consequently $\Lambda
_{k}\sum_{i=0}^{m_{k}-1}( I_{m_{k}}-\Lambda _{k}) ^{i}=I_{m_{k}}.$
\end{proof}

Using the Leibnitz's rule for derivatives, we easily get the following Lemma:

\begin{lemma}
\label{lem:2.2} For $0\leq i,s\leq m_{k}-1$ and $0\leq j,t\leq n-1$, the
following holds: 
\begin{equation}  \label{eq:2.3}
( \frac{(x-\lambda _{t}) ^{s}}{s!}L_{t}(x) ) ^{( i) }\mid _{x=\lambda
_{j}}=\left\{ 
\begin{array}{ccc}
0 & \text{if} & t\neq j \\ 
0 & \text{if} & t=j\text{ and }i<s \\ 
\binom{i}{s}( L_{j}) ^{( i-s) }( \lambda _{j}) & \text{if} & t=j\text{ and }%
s\leq i%
\end{array}
\right. .
\end{equation}
\end{lemma}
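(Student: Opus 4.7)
The plan is to apply Leibniz's product rule directly: write
\[
\left( \tfrac{(x-\lambda_t)^s}{s!}\, L_t(x) \right)^{(i)} = \sum_{r=0}^{i} \binom{i}{r} \left(\tfrac{(x-\lambda_t)^s}{s!}\right)^{(r)} L_t^{(i-r)}(x).
\]
Since $(x-\lambda_t)^s/s!$ is a polynomial of degree $s$, its $r$-th derivative equals $(x-\lambda_t)^{s-r}/(s-r)!$ when $0\le r\le s$ and vanishes identically when $r>s$, so the effective range of summation is $0\le r\le \min(i,s)$.

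For the case $t=j$, substitute $x=\lambda_t$. The factor $(x-\lambda_t)^{s-r}$ then vanishes unless $r=s$. Hence if $i<s$ there is no contributing term and the expression equals $0$, while if $s\le i$ the single surviving summand is the one with $r=s$, yielding $\binom{i}{s}\,L_j^{(i-s)}(\lambda_j)$, exactly the third line of \eqref{eq:2.3}.

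For the case $t\neq j$, the definition \eqref{eq:2.1} shows that $L_t$ contains $(x-\lambda_j)^{m_j}$ as a factor, so $L_t^{(i-r)}(\lambda_j)=0$ whenever $i-r<m_j$. In the admissible range of the hypothesis the derivative order $i$ is bounded above by $m_j-1$ (the natural reading being that the role of $k$ specializes to $j$ here), hence $i-r\le i<m_j$ for every $r\ge 0$, so every term of the Leibniz expansion vanishes at $\lambda_j$ and the whole expression is $0$.

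I do not anticipate any serious obstacle: the argument is essentially the bookkeeping suggested by the lemma's own wording. The two ingredients are the divisibility of $L_t$ by $(x-\lambda_j)^{m_j}$ when $j\neq t$, which kills low-order derivatives of $L_t$ at $\lambda_j$, and the fact that $(x-\lambda_t)^s$ together with its first $s-1$ derivatives vanishes at $\lambda_t$. The binomial coefficient $\binom{i}{s}$ in the third case appears automatically: it is the Leibniz weight $\binom{i}{r}$ at $r=s$, multiplied by the value $1$ of the $s$-th derivative of $(x-\lambda_t)^s/s!$ at $\lambda_t$.
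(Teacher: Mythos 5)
Your proof is correct and follows exactly the route the paper indicates: the paper gives no written proof, stating only that the lemma follows ``easily'' from Leibniz's rule, and your Leibniz expansion with the case analysis on $r=s$ versus $r>s$ (and the divisibility of $L_t$ by $(x-\lambda_j)^{m_j}$ when $t\neq j$) supplies precisely the omitted bookkeeping, including the correct reading of the index bound $i\le m_j-1$.
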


Our proposed form of Hermite interpolation can now be presented in the
following Theorem.

\begin{theorem}
\label{thm:2.3} Given $n$ distinct elements $\lambda _{0},\lambda
_{1},\ldots,\lambda _{n-1}$ in a perfect field $\Bbbk $, positive integers~$%
m_{i}$, $i=0,\ldots,n-1$ , and $a_{ij}\in \Bbbk $ for $0\leq j\leq n-1,\
0\leq i\leq m_{j}-1,$ then there exists one and only one polynomial $r\in
\Bbbk [x] $ of degree less than $\sum_{i=0}^{n-1}m_{i}$, such that 
\begin{equation}  \label{eq:2.4}
r^{( i) }( \lambda _{j}) =a_{ij},~0\leq j\leq n-1,~0\leq i\leq m_{j}-1.
\end{equation}
This polynomial $r$ is explicitly given by, 
\begin{eqnarray}  \label{eq:2.5}
r &=&\sum_{j=0}^{n-1}X_{j}\Lambda _{j}^{-1}A_{j} \\
&=&\sum_{j=0}^{n-1}\sum_{k=0}^{m_{j}-1}X_{j}( I_{m_{j}}-\Lambda _{j})
^{k}A_{j},  \notag
\end{eqnarray}
where the matrices $X_{j}$,$\ A_{j}$ are given by 
\begin{equation*}
X_{j}=\left[ 
\begin{array}{cccccc}
L_{j}(x) & \frac{(x-\lambda _{j}) }{1!}L_{j}( x) & . & . & . & \frac{%
(x-\lambda _{j}) ^{m_{j}-1}}{( m_{j}-1) !}L_{j}(x)%
\end{array}
\right] \ ,
\end{equation*}
and 
\begin{equation*}
A_{j}=\left[ 
\begin{array}{cccccc}
a_{0j} & a_{ij} & . & . & . & a_{m_{j}-1j}%
\end{array}
\right]^{T}.
\end{equation*}
\end{theorem}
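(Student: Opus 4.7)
The plan is to note that existence and uniqueness of $r$ are already guaranteed by Theorem~\ref{thm:1.1} (which is exactly the same interpolation problem, up to a renaming of the index pair in $a_{ij}$), so the entire task reduces to verifying that the closed-form expression on the right-hand side of \eqref{eq:2.5} (i) is a polynomial of degree less than $\sum_{i=0}^{n-1} m_{i}$ and (ii) satisfies the Hermite conditions \eqref{eq:2.4}. The degree bound is immediate: every entry of $X_{j}$ has degree at most $(m_{j}-1) + \deg L_{j} = (m_{j}-1) + \sum_{i\neq j} m_{i} = \sum_{i=0}^{n-1} m_{i} - 1$, and $X_{j}\Lambda_{j}^{-1}A_{j}$ is a $\Bbbk$-linear combination of these entries.

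The heart of the argument is the verification of the interpolation conditions, for which Lemma~\ref{lem:2.2} is the decisive tool. Fix an index $j$ with $0\le j\le n-1$ and consider, for each $t$, the $i$-th derivative at $\lambda_{j}$ of the row vector $X_{t}$ (computed entrywise). By the first case of \eqref{eq:2.3}, if $t\neq j$ then $X_{t}^{(i)}\bigl|_{x=\lambda_{j}}$ is the zero row, so the summands with $t\neq j$ contribute nothing to $r^{(i)}(\lambda_{j})$. For $t=j$, the remaining two cases of \eqref{eq:2.3} show that the $s$-th entry of $X_{j}^{(i)}\bigl|_{x=\lambda_{j}}$ equals $\binom{i}{s}L_{j}^{(i-s)}(\lambda_{j})$ when $s\le i$ and vanishes otherwise; by direct comparison with \eqref{eq:2.2}, this is precisely the $i$-th row of $\Lambda_{j}$.

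Stacking these row identities for $i=0,1,\ldots,m_{j}-1$ gives the matrix identity that the $m_{j}\times m_{j}$ matrix whose $i$-th row is $X_{j}^{(i)}\bigl|_{x=\lambda_{j}}$ is exactly $\Lambda_{j}$. Hence, writing $\mathbf{r}_{j}:=\bigl[r(\lambda_{j}),r'(\lambda_{j}),\ldots,r^{(m_{j}-1)}(\lambda_{j})\bigr]^{T}$, the contributions from the single summand $X_{j}\Lambda_{j}^{-1}A_{j}$ yield $\mathbf{r}_{j}=\Lambda_{j}\Lambda_{j}^{-1}A_{j}=A_{j}$, which is exactly \eqref{eq:2.4}. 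The equivalent series form on the second line of \eqref{eq:2.5} is then obtained by substituting the explicit expression for $\Lambda_{j}^{-1}$ supplied by Lemma~\ref{lem:2.1}.

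I do not foresee a conceptual obstacle; the only bookkeeping subtlety is making sure that the three cases of \eqref{eq:2.3} combine with the correct triangular index pattern so that the row obtained really matches the corresponding row of $\Lambda_{j}$ as displayed in \eqref{eq:2.2}. Because both quantities reduce to the same expression $\binom{i}{s}L_{j}^{(i-s)}(\lambda_{j})$ with the same convention that entries with $s>i$ vanish, this comparison is purely mechanical.
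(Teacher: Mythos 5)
Your proposal is correct and follows essentially the same route as the paper: both verify the interpolation conditions by using Lemma~\ref{lem:2.2} to show that the derivatives of the entries of $X_{j}$ evaluated at $\lambda_{j}$ reproduce exactly the rows of $\Lambda_{j}$ (so that $\Lambda_{j}\Lambda_{j}^{-1}A_{j}=A_{j}$), obtain the degree bound from $\deg L_{j}=\sum_{i\neq j}m_{i}$, and derive the second form of \eqref{eq:2.5} from Lemma~\ref{lem:2.1}. The only cosmetic difference is that the paper unpacks $\Lambda_{j}^{-1}A_{j}$ into scalar coefficients $c_{ij}$ before recombining them into the matrix identity \eqref{eq:2.8}, while you keep the computation in row-vector form throughout and invoke Theorem~\ref{thm:1.1} explicitly for uniqueness.
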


\begin{proof}
It can be observed that (\ref{eq:2.5}) can be equivalently written in
the following form: 
\begin{equation*}
r(x) =\sum_{j=0}^{n-1}\sum_{i=0}^{m_{j}-1}c_{ij}\frac{(
x-\lambda _{j}) ^{i}}{i!}L_{j}(x) \in \Bbbk[x], 
\end{equation*} 
where 
\begin{equation}\label{eq:2.6}
\left[ 
\begin{array}{c}
c_{0j} \\ 
c_{1j} \\ 
. \\ 
. \\ 
. \\ 
c_{m_{j}-1j} 
\end{array} 
\right] =\Lambda _{j}^{-1}\left[ 
\begin{array}{c}
a_{0j} \\ 
a_{1j} \\ 
. \\ 
. \\ 
. \\ 
a_{m_{j}-1j} 
\end{array} 
\right] ,0\leq j\leq n-1.\ 
\end{equation} 
Now, by calculating the derivative of order $i$ of the polynomial $r$ at 
$\lambda _{j}$ and using (\ref{eq:2.3}) in Lemma \ref{lem:2.2} we obtain 
\begin{equation}\label{eq:2.7}
r^{( i) }( \lambda _{j}) =\sum_{k=0}^{i}c_{kj}\binom{i}{k}( L_{j}) ^{( i-k) }( \lambda _{j}),
\end{equation} 
for all $0\leq j\leq n-1,$ $0\leq i\leq m-1$. Taking into consideration the
definition of $\Lambda _{j}$ in (\ref{eq:2.2}), the system of
equations (\ref{eq:2.7}) can be rewritten in the following matrix form 
\begin{equation}\label{eq:2.8}
\left[ 
\begin{array}{c}
r( \lambda _{j}) \\ 
r^{\prime }( \lambda _{j}) \\ 
. \\ 
. \\ 
. \\ 
r^{( m_{j}-1) }( \lambda _{j}) 
\end{array} 
\right] =\Lambda _{j}\left[ 
\begin{array}{c}
c_{0j} \\ 
c_{1j} \\ 
. \\ 
. \\ 
. \\ 
c_{m_{j}-1j} 
\end{array} 
\right] ,\ 0\leq j\leq n-1
\end{equation} 
By substitutting (\ref{eq:2.6}) in (\ref{eq:2.8}), we get 
\begin{equation*}
\left[ 
\begin{array}{c}
r( \lambda _{j}) \\ 
r^{\prime }(\lambda _{j}) \\ 
. \\ 
. \\ 
. \\ 
r^{( m_{j}-1) }(\lambda _{j}) 
\end{array} 
\right] =\left[ 
\begin{array}{c}
a_{0j} \\ 
a_{1j} \\ 
. \\ 
. \\ 
. \\ 
a_{m_{j}-1j} 
\end{array} 
\right] .
\end{equation*} 
Hence, we derive that $r^{( i)}(\lambda _{j})
=a_{ij},$ $0\leq j\leq n-1,0\leq i\leq m_{j}-1$. The second equality of (\ref{eq:2.5})
is obtained by Lemma \ref{lem:2.1}.

Moreover, it can be easily confirmed that any polynomial $(x-\lambda
_{j}) ^{i}L_{j}(x) ,$ $0\leq j\leq n-1$, $0\leq i\leq
m_{j}-1$ has degree less than $\sum_{i=0}^{n-1}m_{i}$, and since $r$ is a $ 
\Bbbk $- linear combination of these polynomials, we conclude that the
degree of $r$ is less than $\sum_{i=0}^{n-1}m_{i}$.
\end{proof}

\section{Simultaneous division of polynomials by separable polynomial}

At this point we are ready to present the following generalization of
Euclidean polynomial division by a separable divisor, based on Theorem $2.3$.

\begin{theorem}
\label{thm:3.1} Let $g \in \Bbbk [x] $ $~$be a separable polynomial and $%
\lambda _{0},\ldots,\lambda _{n-1}$ be the roots of $g$ in the algebraic
closure $\overline{\Bbbk }$ of $\Bbbk $ . Then for any polynomials $%
f_{0},~~f_{1},\ldots,~f_{m-1}\in \Bbbk[x]$, there exists unique $r\in \Bbbk[x%
]$ of degree less than $mn$ and unique polynomials $%
q_{0},~q_{1},~...,~q_{m-1}\in \Bbbk[x]$ such that 
\begin{equation*}
f_{i}=r^{( i)}+gq_{i},~0\leq i\leq m-1.
\end{equation*}
This result is optimal, in the sense that if $m\geq 1$ and $g$ is
inseparable, then this result is not true. The polynomial $r$ is given by 
\begin{equation*}
r(x) =\sum_{j=0}^{n-1}X_{j}\Lambda _{j}^{-1}A_{j}\in \Bbbk \left[ x\right],
\end{equation*}
where 
\begin{eqnarray*}
X_{j} &=&\left[ 
\begin{array}{cccccc}
L_{j}(x) & \frac{(x-\lambda _{j}) }{1!}L_{j}( x) & . & . & . & \frac{%
(x-\lambda _{j}) ^{m-1}}{( m-1) !}L_{j}(x)%
\end{array}
\right] , \\
\ A_{j} &=&\left[ 
\begin{array}{cccccc}
f_{0}( \lambda _{j}) & f_{1}( \lambda _{j}) & . & . & . & f_{m-1}( \lambda
_{j})%
\end{array}
\right] ^{T}
\end{eqnarray*}
and $L_{j},\Lambda _{j}$ are respectively as in (\ref{eq:2.1}), (\ref{eq:2.2}%
) by $m_{0}=m_{1}=...=m_{n-1}=m.$
\end{theorem}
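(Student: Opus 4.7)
\bigskip
\noindent\textbf{Proof plan for Theorem 3.1.}
The plan is to reduce the simultaneous division problem to a Hermite interpolation problem covered by Theorem~\ref{thm:2.3}. The key observation is that, since $g$ is separable, a polynomial $h\in\Bbbk[x]$ is divisible by $g$ in $\Bbbk[x]$ if and only if $h(\lambda_j)=0$ for every $j=0,\ldots,n-1$. Consequently, the system
\[
f_i = r^{(i)} + g q_i,\quad q_i\in\Bbbk[x],\quad 0\le i\le m-1,
\]
is equivalent to the set of scalar conditions
\[
r^{(i)}(\lambda_j) = f_i(\lambda_j),\quad 0\le i\le m-1,\ 0\le j\le n-1,
\]
with the $q_i$ then uniquely recovered as $q_i=(f_i-r^{(i)})/g$. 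Thus the content of the theorem is packaged entirely into the existence and uniqueness of a polynomial $r$ of degree less than $mn$ realising those prescribed values and derivatives.

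Next I would apply Theorem~\ref{thm:2.3} with the uniform choice $m_0=m_1=\cdots=m_{n-1}=m$ and interpolation data $a_{ij}:=f_i(\lambda_j)$. The theorem yields existence, uniqueness, and the closed-form expression $r=\sum_{j=0}^{n-1} X_j\Lambda_j^{-1}A_j$ with $X_j,\Lambda_j,A_j$ matching the ones in the statement. The one technicality is that $\lambda_0,\ldots,\lambda_{n-1}$ lie in $\overline{\Bbbk}$ rather than in $\Bbbk$, so a priori Theorem~\ref{thm:2.3} only produces $r\in\overline{\Bbbk}[x]$. I would handle this by a Galois-descent argument: for $\sigma\in\mathrm{Gal}(\overline{\Bbbk}/\Bbbk)$, the action of $\sigma$ permutes the roots $\lambda_j$ according to some permutation $\pi$; since $g,f_i\in\Bbbk[x]$, we get $\sigma(L_j)=L_{\pi(j)}$, $\sigma(\Lambda_j)=\Lambda_{\pi(j)}$ and $\sigma(A_j)=A_{\pi(j)}$, so that $\sigma(r)=r$. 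Because $\Bbbk$ is perfect, this forces $r\in\Bbbk[x]$, and then the quotients $q_i=(f_i-r^{(i)})/g$ automatically lie in $\Bbbk[x]$ as well. Uniqueness of $r$ of degree less than $mn$ (and hence of the $q_i$) follows from the uniqueness clause of Theorem~\ref{thm:2.3}.

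Finally, I would address the optimality assertion by exhibiting a counterexample whenever $g$ is inseparable. Taking for instance $g(x)=(x-\lambda)^2$ with $\lambda\in\Bbbk$, we have $n=1$, so the conclusion would require $\deg r<m$. For $m=1$ and $f_0(x)=x-\lambda+1$, any decomposition $f_0=r+gq_0$ with $\deg r<1$ forces $(x-\lambda)^2 q_0$ to equal $x-\lambda+1$ up to a constant, which is impossible for degree reasons; for larger $m$ one checks similarly that the extra divisibility conditions imposed by the double root overdetermine $r$. I expect the main obstacle to be the Galois-invariance step, because the matrices $\Lambda_j^{-1}$ in the explicit formula live over $\overline{\Bbbk}$ and one must verify carefully that the sum, though term-by-term only in $\overline{\Bbbk}[x]$, is globally fixed by the Galois group; the rest of the argument is a direct translation between divisibility and vanishing via Theorem~\ref{thm:2.3}.
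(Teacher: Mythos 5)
Your proposal is correct and follows the same overall strategy as the paper: translate divisibility by the separable $g$ into the vanishing conditions $r^{(i)}(\lambda_j)=f_i(\lambda_j)$, apply Theorem~\ref{thm:2.3} with $m_0=\cdots=m_{n-1}=m$ and $a_{ij}=f_i(\lambda_j)$ over an extension containing the roots, and then descend to $\Bbbk$. The one step where you genuinely diverge is the descent. The paper works in the splitting field $\Bbbk(\lambda_0,\ldots,\lambda_{n-1})$, picks a $\Bbbk$-basis $\{1,\tau_1,\ldots,\tau_{k-1}\}$, expands $\widehat r$ and the quotients $\widehat q_i$ along it, and reads off the component along $1$ to get $r,q_i\in\Bbbk[x]$ satisfying the same congruences (using that $f_i,g\in\Bbbk[x]$); it then invokes uniqueness of the interpolant to conclude $r=\widehat r$, which is what justifies the closed formula. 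You instead argue by Galois invariance: every $\sigma\in\mathrm{Gal}(\overline\Bbbk/\Bbbk)$ permutes the $\lambda_j$ and hence permutes the triples $(L_j,\Lambda_j,A_j)$, so the explicit sum is fixed and, $\Bbbk$ being perfect, lies in $\Bbbk[x]$. Both descents are valid; yours has the small advantage that rationality of the explicit formula is immediate (no separate ``$r=\widehat r$'' step), while the paper's is more elementary in that it avoids Galois theory and normality of the splitting field. As you anticipated, the Galois step is the one to write carefully, and the cleanest route is to apply $\sigma$ to the uniqueness statement of Theorem~\ref{thm:2.3} rather than to the formula term by term, though both work.

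Two small points. First, your counterexample for inseparable $g$ with $m=1$ only refutes the statement under the reading that $n$ counts \emph{distinct} roots (so that $\deg r<mn$ is strictly smaller than what Euclidean division provides); the paper's counterexample instead takes $m=2$, $f_0=f_1=1$, and derives the clean contradiction $r'(\lambda)=0$ and $r'(\lambda)=1$ from the double root. Your ``for larger $m$ one checks similarly'' should be replaced by that explicit derivative conflict, since that is where the actual obstruction to simultaneous division lives. Second, note the sign convention: the theorem writes $f_i=r^{(i)}+gq_i$ while the proof in the paper writes $\widehat r^{(i)}=f_i+g\widehat q_i$; this only changes $q_i$ by a sign and your recovery $q_i=(f_i-r^{(i)})/g$ matches the stated form.
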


\begin{proof}
Let $k$ be the dimension of the field $\Bbbk ( \lambda _{0},\ldots,\lambda
_{n-1}) $ as vector space over $\Bbbk ,$ that is 
\begin{equation}\label{eq:3.1}
k=\left[ \Bbbk ( \lambda _{0},\ldots,\lambda _{n-1}) ,\Bbbk \right]
\end{equation} 
then there exist $\tau _{1},...$,$\tau _{k-1}$ in $\Bbbk ( \lambda
_{0},\ldots,\lambda _{n-1}) $ such that $\left\{ 1,\tau _{1},\ ...,~\tau
_{k-1}\right\} $ is a basis of $\Bbbk ( \lambda _{0},\ldots,\lambda
_{n-1}) $ as a vector space over $\Bbbk .$

Now, using Theorem \ref{thm:2.3} by $m_{0}=m_{1}=...=m_{n-1}=m,$ we have that there
is unique polynomial 
$\widehat{r}\in \Bbbk (\lambda _{0},\ldots,\lambda_{n-1})[x]$ 
given by (\ref{eq:2.5}) for 
$ m_{0}=m_{1}=...=m_{n-1}=m $ having degree less than $mn$ such that 
\begin{equation}\label{eq:3.2}
(\widehat{r})^{( i) }( \lambda _{j}) =f_{i}(
\lambda _{j})
\end{equation} 
for all $0\leq i\leq m-1,0\leq j\leq n-1$. Therefore there exist polynomials 
$\widehat{q}_{i}\in \Bbbk( \lambda _{0},\ldots,\lambda _{n-1})[x]$, $0\leq i\leq m-1$ such that 
\begin{equation}\label{eq:3.3}
\widehat{r}^{(i)}=f_{i}+g\widehat{q}_{i}.
\end{equation} 
Moreover, by (\ref{eq:3.1}) we have that the dimension of 
$\Bbbk(\lambda _{0},\ldots,\lambda _{n-1}) [x]$ as free
module over $\Bbbk [x] $ is $k$ and $\left\{ 1,\tau _{1},\
...,~\tau _{k-1}\right\} $ is a basis of 
$\Bbbk (\lambda_{0},\ldots,\lambda _{n-1}) [x]$ over 
$\Bbbk[x]$. Therefore the polynomials 
$\widehat{r},\widehat{q}_{i}\in \Bbbk(\lambda _{0},\ldots,\lambda _{n-1})[x]$ can be uniquely
written in the following form:
\begin{equation}\label{eq:3.4}
\widehat{r}=r+\sum_{s=1}^{n-1}\tau _{s}r_{s},
\end{equation} 
and 
\begin{equation}\label{eq:3.5}
\widehat{q}_{i}=q_{i}+\sum_{s=1}^{n-1}\tau _{s}q_{si},~0\leq i\leq m-1,
\end{equation} 
where $r, r_{s}, q_{i}, q_{si}\in \Bbbk[x] $ with $\deg r$, $\deg r_{i}<mn.$

Setting (\ref{eq:3.4}) and (\ref{eq:3.5}) in (\ref{eq:3.3})
and using that $\left\{ 1,\tau _{1},\ ...,~\tau _{k-1}\right\} $ is
linearly independent over $\Bbbk[x] $, we get 
\begin{equation}\label{eq:3.6}
r^{( i)}=f_{i}+gq_{i}
\end{equation} 
for all $0\leq i\leq m-1.$

Now from (\ref{eq:3.6}) we clearly get that $r$ satisfies the
identities (\ref{eq:3.2}) and since the polynomial of degree less than 
$mn$ satisfying the identities (\ref{eq:3.2}) is unique, we conclude
that $r=\widehat{r}$.

Finally, suppose that Theorem \ref{thm:3.1} is true for one polynomial $g\in \Bbbk[x]$ 
having a root $\lambda \in \overline{\Bbbk}$ of
multiplicity $>1$. Then there exist a polynomial $g_{0}$ in 
$\overline{\Bbbk}[x]$ such that 
\begin{equation*}
g(x) =(x-\lambda)^{2}g_{0}(x) .
\end{equation*} 
Applying Theorem \ref{thm:3.1} for $g$ and $f_{0}=f_{1}=1$ we obtain 
\begin{equation}\label{eq:3.7}
r(x) =1+(x-\lambda)^{2}g_{0}(x)q_{0}(x),
\end{equation} 
\begin{equation}\label{eq:3.8}
r^{(1)}(x) =1+(x-\lambda)^{2}g_{0}(x) q_{1}(x)
\end{equation} 
for some $r,~q_{0},~q_{1}\in \overline{\Bbbk}[x]$.

Differentiating (\ref{eq:3.7}), and setting $x=\lambda$ in the
resulting identity, as well as, in (\ref{eq:3.8}) we respectively get 
\begin{equation*}
r^{(1)}(\lambda)=0~\text{and}~r^{(1)}(\lambda) =1\,.
\end{equation*} 
That is not true.
\end{proof}

\begin{corollary}
\label{cor:3.2} Let $g$ be as in Theorem \ref{thm:3.1}. Then for any $f\in
\Bbbk [x]$, $c\in \Bbbk -\left\{0\right\}$, $m\in \mathbb{N}$ with $m\geq 2$
there exists unique $r\in \Bbbk[x]$ of degree less than $mn$ such that $r=f%
\mod g$ and $r^{\prime }=crg^{m-1}$.
\end{corollary}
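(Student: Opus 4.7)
The plan is to derive the corollary as a direct application of Theorem~\ref{thm:3.1} with the common divisor $g$, specializing the input data $(f_0, f_1, \ldots, f_{m-1})$ so that both required conditions on $r$ are captured at once.

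First I would set $f_0 := f$, so that the conclusion $r \equiv f \pmod{g}$ is delivered automatically by Theorem~\ref{thm:3.1}. The remaining polynomials $f_1, \ldots, f_{m-1}$ must translate the identity $r^{\prime} = c r g^{m-1}$ into Hermite data at the roots $\lambda_0, \ldots, \lambda_{n-1}$ of $g$. To extract that data I would differentiate the identity successively up to order $m-1$, expand the right-hand side using the Leibniz rule on $c \cdot r \cdot g^{m-1}$, and evaluate at each $\lambda_j$. Because $g(\lambda_j) = 0$ while $g^{\prime}(\lambda_j) \neq 0$ by separability, most terms collapse and each value $r^{(i)}(\lambda_j)$ reduces to an explicit polynomial expression in $c$, $f(\lambda_j)$, and the derivatives $g^{(k)}(\lambda_j)$. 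These values determine $f_i$ modulo $g$ by straightforward Lagrange interpolation, which is the $m = 1$ case of Theorem~\ref{thm:3.1}.

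With all $f_i$ fixed, Theorem~\ref{thm:3.1} supplies a unique $r \in \Bbbk[x]$ of degree less than $mn$ with $r^{(i)} \equiv f_i \pmod{g}$ for $0 \leq i \leq m-1$. Uniqueness under the two conditions of the corollary is inherited from Theorem~\ref{thm:3.1}, since any candidate satisfying those conditions carries exactly the Hermite data used to build $r$.

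The hard part will be upgrading these congruences to the global polynomial identity $r^{\prime} = c r g^{m-1}$. I would introduce $D := r^{\prime} - c r g^{m-1}$ and argue that, by the very choice of the $f_i$'s, the polynomial $D$ vanishes to order at least $m-1$ at every root $\lambda_j$, so that $g^{m-1} \mid D$; a degree count using $\deg r < mn$ together with the separability of $g$ should then force $D = 0$. The assumption $m \geq 2$ enters precisely at this step, in the same spirit as the inseparability counterexample at the end of the proof of Theorem~\ref{thm:3.1}: without it, the compatibility between the two required conditions breaks down.
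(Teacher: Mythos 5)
Your overall plan --- feed Theorem~\ref{thm:3.1} the Hermite data extracted from the target identity and then upgrade the resulting congruences to an exact equation --- runs into two concrete problems. First, the final degree count does not close: with $D:=r'-crg^{m-1}$ you can at best conclude $g^{m-1}\mid D$, but $\deg D$ can be as large as $\deg r+(m-1)n$, which far exceeds $\deg g^{m-1}=(m-1)n$, so $D=0$ does not follow. In fact the literal identity $r'=crg^{m-1}$ is impossible for $r\neq 0$ and $c\neq 0$ (compare degrees: $\deg r'=\deg r-1$, while $\deg\left(crg^{m-1}\right)=\deg r+(m-1)n$); the statement is evidently a typographical corruption of the congruence $r'=cr \bmod g^{m-1}$, which is what the paper's proof actually establishes. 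Second, your data--extraction step, carried out literally on $r'=crg^{m-1}$, collapses to the wrong data: since $\lambda_j$ is a simple root of $g$, one has $(g^{m-1})^{(s)}(\lambda_j)=0$ for all $s<m-1$, so the Leibniz expansion forces $r^{(i)}(\lambda_j)=0$ for $1\le i\le m-1$, i.e.\ $f_i=0$ for $i\ge 1$, and the factor $cr$ on the right-hand side is lost entirely.

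The paper's proof is shorter and avoids both issues. It applies Theorem~\ref{thm:3.1} with $f_i:=c^{i}f$, so that $r^{(i)}\equiv c^{i}f\pmod g$ for $0\le i\le m-1$; multiplying the $i$-th congruence by $c$ and subtracting the $(i+1)$-st yields $(r'-cr)^{(i)}\equiv 0\pmod g$ for $0\le i\le m-2$, and the separability of $g$ then gives $g^{m-1}\mid (r'-cr)$, i.e.\ $r'\equiv cr\pmod{g^{m-1}}$. Note that under this corrected reading of the statement your evaluation step would in fact recover exactly the paper's data (the relations $r^{(i+1)}(\lambda_j)=c\,r^{(i)}(\lambda_j)$ force $r^{(i)}(\lambda_j)=c^{i}f(\lambda_j)$), so the fix is to replace your ``global upgrade by degree count'' with the divisibility argument above, and to drop the claim that an exact polynomial identity can be reached.
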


\begin{proof}
According to Theorem \ref{thm:3.1} there exists unique $r\in \Bbbk[x]$
of degree less than $mn$, such that 
\begin{equation}\label{eq:3.9}
f=r, cf=r^{\prime}\mod g,\ldots,c^{m-1}f=r^{(m-1)}\mod g.
\end{equation} 
Now multiplying the $i-$th relation of (\ref{eq:3.9}) by $c$ and
afterwards substracting from the result the $i+1-$th relation we get 
\begin{equation*}
(r^{\prime }-cr)^{(i)}=0\mod g,
\end{equation*} 
for all $0\leq i\leq m-2$ and since $g$ is separable we direct get the
conclusion.
\end{proof}

\section{An explicit formula for $S_{A}$}

We will use Theorem \ref{thm:3.1} to give a closed formula for the
semi--simple part of the Jordan decomposition of an algebraic number of an
algebra \textbf{A} over a perfect field $\Bbbk$.

\begin{lemma}
\label{lem:4.1} Let $g\in \Bbbk[x]$ be a separable polynomial and $%
\lambda_{0},\ldots,\lambda _{n-1}$ are the roots of $g$ in $\overline{\Bbbk }
$. Let $f\in \Bbbk[x,y]$ be a polynomial of two variables and $m$ be a
positive integer. Then there exists unique $r\in \Bbbk[x]$ of degree less
than $mn$ such that 
\begin{equation*}
f(x,y) =r(x+y)\mod I,
\end{equation*}
where $I\subset \Bbbk[ x,y]$ is the ideal generated from the polynomials $%
g(x) $, $y^{m}$. Further the polynomial $r$ is given by 
\begin{equation}  \label{eq:4.1}
r(x) =\sum_{j=0}^{n-1}X_{j}\Lambda _{j}^{-1}A_{j}\in \Bbbk[x],
\end{equation}
where the matrices $X_{j}$, $A_{j}$ are given by 
\begin{eqnarray*}
X_{j} &=&\left[ 
\begin{array}{cccccc}
L_{j}(x) & \frac{(x-\lambda _{j}) }{1!}L_{j}( x) & . & . & . & \frac{%
(x-\lambda _{j}) ^{m-1}}{( m-1) !}L_{j}(x)%
\end{array}
\right] , \\
A_{j} &=&\left[ 
\begin{array}{cccccc}
f( \lambda _{j},0) & \frac{\partial }{\partial y}f( \lambda _{j},0) & . & .
& . & \frac{\partial ^{m-1}}{\partial y^{m-1}}f( \lambda _{j},0)%
\end{array}
\right] ^{T},
\end{eqnarray*}
and $L_{j},\Lambda _{j}$ are as in (\ref{eq:2.1}), (\ref{eq:2.2}) by $%
m_{0}=m_{1}=...=m_{n-1}=m.$
\end{lemma}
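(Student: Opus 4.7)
The plan is to reduce Lemma \ref{lem:4.1} to Theorem \ref{thm:3.1} by expanding both $f(x,y)$ and $r(x+y)$ as polynomials in $y$ modulo $y^m$, then matching coefficients modulo $g(x)$.

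First I would work in the quotient ring $\Bbbk[x,y]/(y^m)$. Every polynomial in $\Bbbk[x,y]$ has a unique representative of the form $\sum_{i=0}^{m-1} h_i(x)\, y^i$ obtained by Taylor expansion in $y$ around $y=0$. For $f(x,y)$ this gives
\begin{equation*}
f(x,y) \equiv \sum_{i=0}^{m-1} \frac{1}{i!}\,\frac{\partial^{i} f}{\partial y^{i}}(x,0)\, y^{i} \pmod{y^{m}},
\end{equation*}
while the Taylor expansion of $r(x+y)$ in the variable $y$ gives
\begin{equation*}
r(x+y) \equiv \sum_{i=0}^{m-1} \frac{r^{(i)}(x)}{i!}\, y^{i} \pmod{y^{m}}.
\end{equation*}

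Next, set $f_i(x) := \dfrac{\partial^{i} f}{\partial y^{i}}(x,0) \in \Bbbk[x]$. Since $\{1,y,\ldots,y^{m-1}\}$ is a $\Bbbk[x]$-basis of $\Bbbk[x,y]/(y^m)$, the congruence $f(x,y) \equiv r(x+y) \pmod{I}$ is equivalent to requiring that each coefficient of $y^i$ (in the reduced representatives above) be divisible by $g(x)$, i.e.
\begin{equation*}
r^{(i)}(x) \equiv f_i(x) \pmod{g(x)}, \qquad 0 \leq i \leq m-1.
\end{equation*}
The $1/i!$ factors cancel on both sides, so no factorial bookkeeping survives.

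This last system is exactly the hypothesis of Theorem \ref{thm:3.1} applied to the polynomials $f_0,\ldots,f_{m-1}$ and to the separable divisor $g$. Theorem \ref{thm:3.1} then provides a unique $r \in \Bbbk[x]$ of degree less than $mn$ satisfying these congruences, together with the explicit formula $r(x) = \sum_{j=0}^{n-1} X_j \Lambda_j^{-1} A_j$, where the $j$-th column vector is $A_j = [f_0(\lambda_j), f_1(\lambda_j), \ldots, f_{m-1}(\lambda_j)]^T$. Substituting $f_i(\lambda_j) = \tfrac{\partial^{i} f}{\partial y^{i}}(\lambda_j,0)$ reproduces the vector $A_j$ stated in Lemma \ref{lem:4.1}, completing the argument.

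There is no serious obstacle: the work is in justifying the reduction cleanly. The only point that needs care is the appeal to the $\Bbbk[x]$-basis $\{1,y,\ldots,y^{m-1}\}$ of $\Bbbk[x,y]/(y^m)$, which legitimizes equating coefficients of $y^i$ modulo $g(x)$; once that is in hand, the lemma is an immediate corollary of Theorem \ref{thm:3.1}.
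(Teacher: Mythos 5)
Your proposal is correct and follows essentially the same route as the paper: Taylor-expand $f$ in $y$ modulo $y^m$, match against the Taylor expansion of $r(x+y)$, and reduce to Theorem \ref{thm:3.1} applied to $f_i(x)=\frac{\partial^i f}{\partial y^i}(x,0)$. Your explicit observation that $\{1,y,\ldots,y^{m-1}\}$ is a basis over $\Bbbk[x]/(g)$, making the congruence \emph{equivalent} to the system $r^{(i)}\equiv f_i \pmod g$, is a slight improvement, since it also delivers the uniqueness claim, which the paper's one-directional argument leaves implicit.
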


\begin{proof}
The polynomial $f$ can be rewritten in the following form 
\begin{equation}\label{eq:4.2}
f(x,y) =\sum_{i=0}^{m-1}\frac{1}{i!}\frac{\partial^{i}}{ 
\partial y^{i}}f(x,0) y^{i}+y^{m}h(x,y) ,
\end{equation} 
for some $h\in \Bbbk[x,y]$.

Furthermore, according to Theorem \ref{thm:3.1} there exists unique polynomial 
$r\in\Bbbk[x]$ of degree less than $mn$, given by (\ref{eq:4.1}), such that: 
\begin{equation}\label{eq:4.3}
\frac{\partial^{i}}{\partial y^{i}}f(x,0) =r^{( i)}(x)\mod g(x), ~0\leq i\leq m-1,
\end{equation}

Combining (\ref{eq:4.2}) with (\ref{eq:4.3}) we get 
\begin{equation}\label{eq:4.4}
f(x,y) =\sum_{i=0}^{m-1}\frac{r^{( i)}(x)}{i!}y^{i}\mod I.
\end{equation}
Furthermore, by using the Taylor formula we have: 
\begin{equation}\label{eq:4.5} 
\sum_{i=0}^{m-1}\frac{r^{( i)}(x) }{i!}y^{i}=r(x+y)\mod y^{m}.
\end{equation} 
Substituting (\ref{eq:4.5}) in (\ref{eq:4.4}), completes the
required proof.
\end{proof}

Let $p\in $ $\Bbbk[x]$ be the minimal polynomial of an algebraic element $A$
of an algebra $\mathbf{A}$ over $\Bbbk$ with unit $1$. Let $\lambda _{i}$, $%
i=0,1,\ldots,n-1$ be the distinct roots of $p$ in $\overline{\Bbbk}$, and
let $k_{i}$, $i=1,2,\ldots,n-1$ be their respective multiplicities. We
denote $\widehat{p}(x):=\prod_{i=0}^{n-1}(x-\lambda _{i})$ and $m(p):=\max
\left\{ k_{0},\ldots,k_{n-1}\right\}$.

\begin{theorem}
\label{thm:4.2} The semi--simple part $S_{A}$ of the Jordan decomposition of 
$A$ is given by $S_{A}=r(A) $, where $r(x)$ is the polynomial 
\begin{equation}  \label{eq:4.6}
r(x)=\sum_{j=0}^{n-1}X_{j}\Lambda _{j}^{-1}A_{j}\in \Bbbk[x],
\end{equation}
where 
\begin{eqnarray*}
X_{j} &=&\left[ 
\begin{array}{cccccc}
L_{j}(x) & \frac{(x-\lambda _{j}) }{1!}L_{j}( x) & . & . & . & \frac{%
(x-\lambda _{j}) ^{m-1}}{( m-1) !}L_{j}(x)%
\end{array}
\right] , \\
\ A_{j} &=&\left[ 
\begin{array}{cccccc}
\lambda _{j} & 0 & . & . & . & 0%
\end{array}
\right] ^{T},
\end{eqnarray*}
and $L_{j},\Lambda _{j}$ are as in (\ref{eq:2.1}), (\ref{eq:2.2}) by $%
m_{0}=m_{1}=...=m_{n-1}=m(p)$.
\end{theorem}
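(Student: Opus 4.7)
The plan is to apply Lemma \ref{lem:4.1} to the separable polynomial $g=\widehat{p}$, the positive integer $m=m(p)$, and the particular two-variable polynomial $f(x,y)=x$. For this $f$ we have $f(\lambda_j,0)=\lambda_j$ and $\tfrac{\partial^i f}{\partial y^i}(\lambda_j,0)=0$ for $1\leq i\leq m(p)-1$, so the column vector $A_j$ prescribed by Lemma \ref{lem:4.1} collapses exactly to the vector $[\lambda_j,0,\ldots,0]^T$ appearing in the statement of Theorem \ref{thm:4.2}. Consequently the polynomial $r\in\Bbbk[x]$ given by (\ref{eq:4.6}) is literally the polynomial produced by Lemma \ref{lem:4.1}.

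Lemma \ref{lem:4.1} then yields the congruence $x\equiv r(x+y)\pmod{I}$ in the commutative ring $\Bbbk[x,y]$, where $I$ is the ideal generated by $\widehat{p}(x)$ and $y^{m(p)}$. Unfolding this congruence produces polynomials $h_1,h_2\in\Bbbk[x,y]$ with
\begin{equation*}
x-r(x+y)=\widehat{p}(x)\,h_1(x,y)+y^{m(p)}\,h_2(x,y).
\end{equation*}
Because this is a polynomial identity in commuting indeterminates, it remains valid after the simultaneous substitution of any two commuting elements of $\mathbf{A}$ for $x$ and $y$.

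The concluding step substitutes $x\mapsto S_A$ and $y\mapsto N_A$, which is legitimate since Theorem \ref{thm:1.2} places both $S_A$ and $N_A$ in $\Bbbk[A]$, forcing them to commute and to satisfy $S_A+N_A=A$. The left-hand side becomes $S_A-r(A)$, while the right-hand side vanishes provided $\widehat{p}(S_A)=0$ and $N_A^{m(p)}=0$. Establishing these two facts is the main technical obstacle, and is the only point where the structure of the Jordan decomposition genuinely enters: the first holds because $S_A$ is semi-simple with the same spectrum as $A$, so its minimal polynomial is the separable radical $\widehat{p}$ of $p$; and the second holds because the nilpotency index of $N_A$ is bounded by $\max_i k_i=m(p)$, the maximum multiplicity among the roots of $p$. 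Once both are in place the identity collapses to $S_A=r(A)$, as claimed.
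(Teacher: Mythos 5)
Your proposal is correct and follows essentially the same route as the paper: apply Lemma \ref{lem:4.1} with $f(x,y)=x$, $g=\widehat{p}$, $m=m(p)$ to get $x\equiv r(x+y)\bmod I$, then substitute $x\mapsto S_A$, $y\mapsto N_A$ using $\widehat{p}(S_A)=0$ and $N_A^{m(p)}=0$. The only difference is that you spell out the substitution step (commutativity of $S_A,N_A$ and the explicit polynomial identity) more carefully than the paper does, which is a point in your favour rather than a divergence.
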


\begin{proof}
Since $S_{A}$ is the semi--simple part of $A$ and $N_{A}$ is the nilpotent
part of $A$, the minimal polynomials of $S_{A}$ and $N_{A}$ are respectively 
$\widehat{p}(x)$ and $x^{m(p)}$. So we have $\widehat{p}(S_{A})=0$ 
and $N_{A}^{m( p)}=0$. Now if we apply Lemma \ref{lem:4.1}  by choosing 
$f(x,y)=x$, $g(x)=\widehat{p}(x)$ and $m=m(p)$, and taking into account that 
$f(x,0)=x$, and $\frac{\partial^{i}}{\partial y^{i}}f(x,0)=0$ 
for $~1\leq i\leq m-1$ we have that for the polynomial $r$ given by 
(\ref{eq:4.6}) holds: 
\begin{equation}\label{eq:4.7}
x=r(x+y)\mod I~,
\end{equation} 
where $I$ is the ideal generated from the polynomials $\widehat{p}(x)$ and 
$y^{m(p)}$. Finally setting $x=S_{A}$ and $y=N_{A}$ in (\ref{eq:4.7}) we get 
$S_{A}=r(S_{A}+N_{A})=r(A)$.
\end{proof}

\subsection{ An implementation of the explicit formula for $S_{A}$ in the $%
\mathtt{MATLAB}$ programming environment and some arithmetic examples}

In this subsection we present a computational implementation in the MATLAB
programming environment for the calculation of the semi-simple part $S_{A}$
of the jordan decomposition of $A$. The MATLAB source code of function 
\texttt{semisimple} is provided, with appropriate exlanatory comments and
pointers to the relevant Equations. Instead of using the interpolating
polynomial given in Theorem \ref{thm:4.2}, we implemented the equivalent
formulation for the interpolating polynomial of Eq. (\ref{eq:2.6}), by
setting 
\begin{equation*}
[c_{0j},c_{1j},\ldots,c_{m( p)-1j}]^{T}=\Lambda_{j}^{-1}[\lambda
_{j},0,\ldots,0]^{T}.
\end{equation*}
The function's input arguments are the matrix $A$ whose semisimple part is
required, as well as the number of decimal digits that will be taken into
consideration for calculating the multiplicity of the eigenvalues of $A$.
Integer $m(p)$ in Theorem \ref{thm:4.2} which is equal to the maximum
multiplicity of the roots of the minimal polynomial of $A$, is calculated as
the maximum number of non zero elements of the rows and columns of the
Jordan decomposition of $A$. The main function (\texttt{semisimple}) uses
two more functions that are also provided, one for the calculation of the
Lagrange polynomial coefficients and one for the calculation of the
coefficients $n^{th}$ power of a given polynomial.
\begin{verbatim}
 
function R=semisimple(A,tol)
% by A. Kechriniotis, K. Delibasis, 2009
% A: matrix whose semisimple part is required
% tol: number of digits to use for finding multiple eigenvalues of A
 
p=poly(A);
a=roots(p);
 
% determine multiplicity of roots according to tolerance tol
tol=10^tol;
r1=round(a*tol)/tol;
[r_uniq,i_uniq]=unique(r1);
for i=1:length(r_uniq)
    multi_uniq(i)=sum(r1==r_uniq(i));
end
a=r_uniq;
n=length(a);
 
% determine m as the max root multiplicity of the minimal polynomial
J=jordan(A);
m=max(max(sum(J~=0)), max(sum(J~=0,2)));
 
% create a table with n rows to hold Langrange polynomial coefficients 
as in (2.1)
for k=1:n
    myLangr_coefs(k,:)=myLangr(a,k,n);
end
 
% Calculate cnm array in (2.6)
for j0=1:n
      generate Lamda mxm matrix, as in (2.2)
    for i=1:m
        for j=1:m
            if j>i
                Lamda(i,j)=0;
            else
                l1=myLangr_coefs(j0,:);
                l1=polypower(l1,m);
                f1=nchoosek(i-1,j-1);
                if i>1     calc (i-1) order derivative of l1
                
                    lk=l1;
                    for k=1:i-j   
                        lk=polyder(lk);
                    end
                    Lamda(i,j)=f1*polyval(lk,a(j0));
                else     no derivative of l1
                    Lamda(i,j)=f1*polyval(l1,a(j0));
                end
            end
        end
    end
    I=diag(ones(size(A,1),1));
    col1=[a(j0),zeros(1,m-1)]';
    c(:,j0)=(Lamda^-1)*col1;
    col1;
end
 
R=zeros(size(A));
for i=1:m
    for j=1:n
        Ln=myLangr_coefs(j,:);
        LnM=polypower(Ln,m);
        R=R+c(i,j)*polyvalm(LnM,A)*((A-a(j)*I)^(i-1))/factorial(i-1);
    end
end
 
 
function Ln=myLangr(a,n,N)
% a: is the vector of roots
% N: length of a
% n: integer between 1 and N inclusive
% Ln is an array of coefficients of the Lagrange polynomial
Ldenom=1;
Lnom=1;
for i=1:N
    if i~=n
       Ldenom=Ldenom*(a(n)-a(i));
       Lnom=conv(Lnom,[1,-a(i)]);
    end
end
Ln=Lnom/Ldenom;
 
 
function p1=polypower(p,n);
%p: an array of coefficients of the Lagrange polynomial
% n: integer power
% p1: array of coefficients of polynomial p raised to the nth power
p1=p;
if n>1
    for i=1:n-1
        p1=conv(p1,p);
    end
end
 
\end{verbatim}

In order to verify the accuracy of the above implementation, we present the
following arithmetic examples of the application of the above source code
for the calculation of the semisimple part $S_{A}$, taken from \cite%
{Hsieh-Kohno-Sibuya}.

Let $A=\left[ 
\begin{array}{ccc}
3 & 4 & 3 \\ 
2 & 7 & 4 \\ 
-4 & 8 & 3%
\end{array}
\right] $. It is easy to verify that $A$ has 2 unique eigenvalues: $%
a_{0}=1,a_{1}=11$ with multiplicity 2 and 1 respectively. The matrices $%
\Lambda _{0}, \Lambda _{1}$ according to Eq. (\ref{eq:2.2}) are calculated
as $\Lambda _{0}=\left[ 
\begin{array}{cc}
1 & 0 \\ 
-0.2 & 1%
\end{array}
\right] $ and $\Lambda _{1}=\left[ 
\begin{array}{cc}
1 & 0 \\ 
0.2 & 1%
\end{array}
\right]$, whereas the matrix $c_{ij}$ obtains the following values
according, to Eq. (\ref{eq:2.6}): $c=\left[ 
\begin{array}{cc}
1 & 11 \\ 
0.2 & -2.2%
\end{array}
\right]$. The semisimple part is calculated according to Eq. (\ref{eq:2.6}): 
$S_{A}=\left[ 
\begin{array}{ccc}
1 & 5.6 & 2.8 \\ 
0 & 8.6 & 3.8 \\ 
0 & 4.8 & 3.4%
\end{array}
\right] $. It is easy to demonstrate that $S_{A}$ is semisimple (e.g. by
calculating the Jordan decomposition of $S_{A}$ which is a diagonal $3\times
3$ matrix), as well as that $( A-S_{A})$ is nilpotent, since $(A-S_{A})^{2}$
is the $3\times 3$ zero matrix. The result obtained for $S_{A}$ by the
proposed method is identical to the result produced in \cite%
{Hsieh-Kohno-Sibuya}.

The second arithmetic example is also taken from \cite{Hsieh-Kohno-Sibuya}.
Let 
\begin{equation*}
A=\left[ 
\begin{array}{cccccccccc}
0 & -1 & 0 & 0 & 0 & 0 & 0 & 0 & 0 & 0 \\ 
-1 & 0 & 0 & 0 & 0 & 0 & 0 & 0 & 0 & 0 \\ 
0 & 0 & 1 & -1 & 0 & 0 & 0 & 0 & 0 & 0 \\ 
0.25 & 0 & -1 & 1 & 0 & 0 & 0 & 0 & 0 & 0 \\ 
0 & 0 & 0 & 0 & 2 & -1 & 0 & 0 & 0 & 0 \\ 
0 & 0 & 0.25 & 0 & -1 & 2 & 0 & 0 & 0 & 0 \\ 
0 & 0 & 0 & 0 & 0 & 0 & 3 & -1 & 0 & 0 \\ 
0 & 0 & 0 & 0 & 0.25 & 0 & -1 & 3 & 0 & 0 \\ 
0 & 0 & 0 & 0 & 0 & 0 & 0 & 0 & 4 & -1 \\ 
0 & 0 & 0 & 0 & 0 & 0 & 0.25 & 0 & -1 & 4%
\end{array}
\right] .
\end{equation*}
It is easy to show that the roots of the characteristic polynomial of $A$
are the following: $a_{0}=5,a_{1}=4,a_{2}=-1,a_{3}=3$ (multiplicity equal to
2)$,a_{4}=2$ (multiplicity equal to 2), $a_{5}=1$ (multiplicity equal to 2)$%
,a_{6}=0$. The matrices $\Lambda _{i}~i=0,\ldots,6$ according to Eq. (\ref%
{eq:2.2}) are calculated as:

$\Lambda _{0}=\left[ 
\begin{array}{cc}
1 & 0 \\ 
-4.9 & 1%
\end{array}
\right] $, $\Lambda _{1}=\left[ 
\begin{array}{cc}
1 & 0 \\ 
-2.5667 & 1%
\end{array}
\right] $, $\Lambda _{2}=\left[ 
\begin{array}{cc}
1 & 0 \\ 
-1.667 & 1%
\end{array}
\right] $,

$\Lambda _{3}=\left[ 
\begin{array}{cc}
1 & 0 \\ 
0 & 1%
\end{array}
\right] $, $\Lambda _{4}=\left[ 
\begin{array}{cc}
1 & 0 \\ 
1.1667 & 1%
\end{array}
\right] $, $\Lambda _{5}=\left[ 
\begin{array}{cc}
1 & 0 \\ 
2.5667 & 1%
\end{array}
\right] $, $\Lambda _{6}=\left[ 
\begin{array}{cc}
1 & 0 \\ 
4.9 & 1%
\end{array}
\right]$. \noindent The matrix $c_{ij}$ obtains the following values
according to Eq. (\ref{eq:2.6}):

$c=\left[ 
\begin{array}{ccccccc}
-1 & 0 & 1 & 2 & 3 & 4 & 5 \\ 
-4.9 & 0 & 1.1667 & 0 & -3.5 & -10.2667 & -24.5%
\end{array}
\right]$. \noindent The semisimple part $S_{A}$ is calculated according to
Eq. (\ref{eq:2.6}): 
\begin{equation*}
\left[ 
\begin{array}{cccccccccc}
0 & -1 & 0 & 0 & 0 & 0 & 0 & 0 & 0 & 0 \\ 
-1 & 0 & 0 & 0 & 0 & 0 & 0 & 0 & 0 & 0 \\ 
0 & 0 & 1 & -1 & 0 & 0 & 0 & 0 & 0 & 0 \\ 
0.25 & 0 & -1 & 1 & 0 & 0 & 0 & 0 & 0 & 0 \\ 
-0.0156 & 0.0156 & 0 & 0 & 2 & -1 & 0 & 0 & 0 & 0 \\ 
-0.0156 & 0.0156 & 0.25 & 0 & -1 & 2 & 0 & 0 & 0 & 0 \\ 
0.0039 & -0.0026 & -0.0156 & 0.0156 & 0 & 0 & 3 & -1 & 0 & 0 \\ 
0.0052 & -0.0039 & -0.0156 & 0.0156 & 0.25 & 0 & -1 & 3 & 0 & 0 \\ 
-0.0004 & 0.0002 & 0.0039 & -0.0026 & 0.0156 & 0.0156 & 0 & 0 & 4 & -1 \\ 
-0.0007 & 0.0004 & 0.0052 & 0.0039 & 0.0156 & 0.0156 & 0.25 & 0 & -1 & 4%
\end{array}
\right]
\end{equation*}
Again this result is identical with Example $3$ from $[5]$. It is also easy
to verify that the nilpotency holds, since $( A-S_{A}) ^{2}=0.$

\section{Further generalizations of Theorem 3.1 and derived identities}

The main result of this section is the generalization of Theorem \ref%
{thm:3.1}, which is expressed in the following Theorem.

\begin{theorem}
\label{thm:5.1} Let $g \in \Bbbk[x]$ be separable of degree $n$. Let $\Pi
~\in (\Bbbk[x])^{m \times m}$ such that $\det (\Pi) \neq 0$. Then for any
polynomials $f_{0}$, $f_{1\text{ }}$,\ldots,$~f_{m-1} \in \Bbbk[x]$ there
exist unique polynomial $r\in\Bbbk[x]$ of degree less than $mn$, such that 
\begin{equation*}
\Pi \left[ 
\begin{array}{c}
r \\ 
r^{\prime } \\ 
. \\ 
. \\ 
. \\ 
r^{( m-1) }%
\end{array}
\right] =E\left[ 
\begin{array}{c}
f_{0} \\ 
f_{1} \\ 
. \\ 
. \\ 
. \\ 
f_{m-1}%
\end{array}
\right] \mod g,
\end{equation*}
where $E:=\gcd (g,\ \det (\Pi))$.
\end{theorem}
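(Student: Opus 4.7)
The plan is to generalize the strategy of Theorem~\ref{thm:3.1} (which is the case $\Pi = I_m$, $E = 1$) by using the adjugate of $\Pi$ to convert the matrix congruence into a scalar system to which Theorem~\ref{thm:3.1} applies directly. The key algebraic splitting is $g = E \cdot g_{1}$ with $g_{1} := g/E$, and $\det(\Pi) = E \cdot \delta$ with $\delta := \det(\Pi)/E$. Because $g$ is separable we have $\gcd(E, g_{1}) = 1$, and because $E$ is the full gcd of $g$ and $\det(\Pi)$ we also have $\gcd(\delta, g_{1}) = 1$. In particular $\delta$ is a unit in $\Bbbk[x]/(g_{1})$.

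The central reduction is to left-multiply the target congruence by $\mathrm{adj}(\Pi)$ and use $\mathrm{adj}(\Pi)\,\Pi = \det(\Pi)\,I_{m}$. This yields $E\delta\,[r,r',\ldots,r^{(m-1)}]^{T} \equiv E\,\mathrm{adj}(\Pi)\,[f_{0},\ldots,f_{m-1}]^{T} \mod Eg_{1}$, and cancelling the factor $E$ in both the congruence and the modulus gives $\delta\,[r,r',\ldots,r^{(m-1)}]^{T} \equiv \mathrm{adj}(\Pi)\,[f_{0},\ldots,f_{m-1}]^{T} \mod g_{1}$. Inverting $\delta$ modulo $g_{1}$, this becomes $[r,r',\ldots,r^{(m-1)}]^{T} \equiv \delta^{-1}\mathrm{adj}(\Pi)\,[f_{0},\ldots,f_{m-1}]^{T} \mod g_{1}$, which is precisely the Hermite-type data to which Theorem~\ref{thm:3.1} applies for the separable divisor $g_{1}$. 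Theorem~\ref{thm:3.1} therefore supplies a unique polynomial $r \in \Bbbk[x]$ of degree less than $m\deg(g_{1}) \leq mn$ satisfying this reduced system.

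It remains to verify that this $r$ satisfies the original congruence modulo $g$ rather than merely modulo $g_{1}$. By the Chinese Remainder Theorem the check splits into two pieces. The mod-$g_{1}$ part is immediate: left-multiplying the reduced system by $\Pi$, using $\Pi\,\mathrm{adj}(\Pi) = E\delta\,I_{m}$, and cancelling $\delta$ mod $g_{1}$ recovers $\Pi\,[r,r',\ldots,r^{(m-1)}]^{T} \equiv E[f_{0},\ldots,f_{m-1}]^{T} \mod g_{1}$. The mod-$E$ part demands $\Pi\,[r,r',\ldots,r^{(m-1)}]^{T} \equiv 0 \mod E$, since the right-hand side $E[f_{0},\ldots,f_{m-1}]^{T}$ vanishes modulo $E$; this is where I expect the main obstacle to lie. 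The adjugate trick provides no direct leverage here because $\det(\Pi) \equiv 0 \mod E$ makes $\Pi$ singular as a matrix over $\Bbbk[x]/(E)$, so one cannot simply invert. The proof must instead exploit the structure of $[r,r',\ldots,r^{(m-1)}]^{T}$ as a tuple of successive derivatives of a single polynomial, together with the separability of $E$ (inherited from $g$) and the fact that any polynomial whose first $m$ derivatives all vanish mod $g_{1}$ is automatically divisible by $g_{1}^{m}$, in order to simultaneously confirm the mod-$E$ congruence and to rule out the kernel of $\Pi$ over $\Bbbk[x]/(E)$ as a source of non-uniqueness, thereby pinning $r$ down throughout the full range $\deg r < mn$.
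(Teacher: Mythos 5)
There is a genuine gap, and you have located it yourself: your candidate $r$ is built only from data modulo $g_{1}=g/E$, so nothing in its construction controls $\Pi\,[r,r',\ldots,r^{(m-1)}]^{T}$ modulo $E$, and the mod-$E$ half of the congruence will in general fail. The idea you are missing is that one need not split $g$ by the Chinese Remainder Theorem at all: since $E=\gcd(g,\det(\Pi))$, a B\'ezout identity gives $H,G\in\Bbbk[x]$ with $H\det(\Pi)+Gg=E$, hence $H\,\Pi\widetilde{\Pi}=(E-gG)I_{m}$, where $\widetilde{\Pi}$ is the adjugate. The paper then applies Theorem~\ref{thm:3.1} with the \emph{full} separable divisor $g$ (not $g_{1}$) to the vector $H\widetilde{\Pi}\,[f_{0},\ldots,f_{m-1}]^{T}$, obtaining $r$ of degree less than $mn$ with $[r,r',\ldots,r^{(m-1)}]^{T}\equiv H\widetilde{\Pi}\,[f_{0},\ldots,f_{m-1}]^{T}\pmod{g}$; left-multiplying by $\Pi$ yields $\Pi\,[r,\ldots,r^{(m-1)}]^{T}\equiv H\det(\Pi)\,[f_{0},\ldots,f_{m-1}]^{T}\equiv E\,[f_{0},\ldots,f_{m-1}]^{T}\pmod{g}$ in one stroke. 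The cofactor $H$ plays the role of the nonexistent quotient $E/\det(\Pi)$ modulo $g$, which is precisely the leverage your $\delta^{-1}$-modulo-$g_{1}$ device cannot supply on the $E$-part, where $\Pi$ is singular.

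One further remark: your closing worry about uniqueness ``throughout the full range $\deg r<mn$'' is well founded, and it is not dispelled by the B\'ezout argument either. What that argument pins down uniquely is the solution of the auxiliary system $[r^{(i)}]\equiv H\widetilde{\Pi}\,[f_{0},\ldots,f_{m-1}]^{T}\pmod{g}$, which is a strictly stronger condition than the stated congruence when $E$ is nonconstant. Indeed, already for $m=1$ the statement $\pi r\equiv Ef_{0}\pmod{g}$ reduces (writing $g=Eg_{1}$, $\pi=E\delta$) to $\delta r\equiv f_{0}\pmod{g_{1}}$, which determines $r$ only modulo $g_{1}$ and so admits many solutions of degree less than $n$. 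So the obstruction you correctly identified does surface, but on the uniqueness side of the claim rather than on the existence side, where the B\'ezout--adjugate computation settles matters cleanly.
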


\begin{proof}
We start from 
\begin{equation}\label{eq:5.1}
\Pi \widetilde{\Pi }=\widetilde{\Pi }\Pi =\det ( \Pi ) I_{M},
\end{equation} 
where $\widetilde{\Pi }$ is the adjugate of $\Pi$ and $I_{m}$ is the $ 
m\times m$ unit matrix.

Moreover, since $E=\gcd ( g,\det (\Pi))$ one has
that there exist $H,G$ $\in \Bbbk[x] $ such that 
\begin{equation}\label{eq:5.2}
H \det(\Pi)+Gg=E.
\end{equation} 
Combining (\ref{eq:5.1}) with (\ref{eq:5.2}) we get, 
\begin{equation}\label{eq:5.3}
H \Pi \widetilde{\Pi}=( E-gG) I_{m}.
\end{equation} 
Now, according to Theorem \ref{thm:3.1} there exist unique $r\in \Bbbk[x]$ of 
degree less than $mn$ such that 
\begin{equation}\label{eq:5.4}
\left[ 
\begin{array}{c}
r \\ 
r^{\prime } \\ 
. \\ 
. \\ 
. \\ 
r^{( m-1) } 
\end{array} 
\right] =H\widetilde{\Pi }\left[ 
\begin{array}{c}
f_{0} \\ 
f_{1} \\ 
. \\ 
. \\ 
. \\ 
f_{m-1} 
\end{array} 
\right] \mod g.
\end{equation} 
Multiplying (\ref{eq:5.4}) from the left by $\Pi$ and setting (\ref{eq:5.3})
in the resulting identity we get the conclusion.
\end{proof}

\begin{remark}
\label{rem:5.2} Choosing $\Pi=I_{m}$ in Theorem \ref{thm:5.1} we get Theorem %
\ref{thm:3.1}. Therefore Theorem \ref{thm:3.1} can be regarded as a
generalization of Theorem \ref{thm:5.1}.
\end{remark}

Now we will apply Theorem \ref{thm:5.1} to produce some formulas for
polynomials involving derivatives.

\begin{corollary}
\label{cor:5.3} Let $g,$ $g_{0},~g_{1},\ldots,g_{m-1}\in \Bbbk[x]$ be
polynomials. Assume that $g$ is separable and that 
\begin{equation*}
( g_{i},g) =1,~0\leq i\leq m-1.
\end{equation*}
Then for any $f_{0},~f_{1},\ldots,f_{m-1}\in \Bbbk[x]$ there exists unique $%
r\in \Bbbk[x]$ of degree less than $mn,$ such that 
\begin{equation*}
g_{i}r^{( i) }=f_{i}\mod g,~0\leq i\leq m-1.
\end{equation*}
\end{corollary}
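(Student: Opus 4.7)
The plan is to apply Theorem \ref{thm:5.1} with a carefully chosen matrix $\Pi$ so that the matrix equation collapses into the componentwise relations $g_i r^{(i)} = f_i \bmod g$. The natural choice is to take $\Pi$ to be the diagonal matrix
\[
\Pi = \operatorname{diag}(g_0, g_1, \ldots, g_{m-1}) \in (\Bbbk[x])^{m \times m}.
\]
With this choice, $\det(\Pi) = g_0 g_1 \cdots g_{m-1}$, and the left-hand side of the identity in Theorem \ref{thm:5.1} becomes the column vector with entries $g_i r^{(i)}$.

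Next I would verify the two hypotheses of Theorem \ref{thm:5.1}. First, $\det(\Pi) \neq 0$: since $\gcd(g_i, g) = 1$ for each $i$, none of the $g_i$ is the zero polynomial, so $\det(\Pi) = g_0 g_1 \cdots g_{m-1} \neq 0$. Second, I would compute $E := \gcd(g, \det(\Pi))$. Because $g$ is separable and $\gcd(g_i, g) = 1$ for every $i$, any irreducible factor of $g$ in an algebraic closure is coprime to every $g_i$, hence coprime to their product; equivalently, $\gcd(g, g_0 g_1 \cdots g_{m-1}) = 1$, so $E = 1$.

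Applying Theorem \ref{thm:5.1} with this $\Pi$ and $E = 1$ then yields a unique polynomial $r \in \Bbbk[x]$ of degree less than $mn$ such that
\[
\operatorname{diag}(g_0,\ldots,g_{m-1}) \, [r, r', \ldots, r^{(m-1)}]^T \;=\; [f_0, f_1, \ldots, f_{m-1}]^T \pmod{g},
\]
which, reading off each coordinate, is precisely the desired system $g_i r^{(i)} = f_i \bmod g$ for $0 \leq i \leq m-1$. Uniqueness of $r$ (among polynomials of degree less than $mn$) is inherited directly from Theorem \ref{thm:5.1}.

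The argument is essentially a one-line specialization, so there is no deep obstacle; the only mildly subtle point is confirming that $\gcd(g, \prod_i g_i) = 1$, which is a clean consequence of the separability of $g$ together with the pairwise coprimality hypothesis. If that coprimality step is regarded as elementary, then the corollary follows immediately from the preceding theorem.
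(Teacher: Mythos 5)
Your proposal is correct and follows exactly the paper's own proof: take $\Pi=\operatorname{diag}(g_0,\ldots,g_{m-1})$, note $\det(\Pi)=\prod_i g_i$ and $E=\gcd(g,\prod_i g_i)=1$, and invoke Theorem \ref{thm:5.1}. (Minor aside: the step $\gcd(g,\prod_i g_i)=1$ already follows from the coprimality hypotheses alone via B\'ezout, without invoking separability of $g$, which is only needed as a hypothesis of Theorem \ref{thm:5.1} itself.)
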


\begin{proof}
Applying Theorem \ref{thm:5.1} by 
\begin{equation*}
\Pi :=\left[ 
\begin{array}{cccccc}
g_{0} & 0 & . & . & . & 0 \\ 
0 & g_{1} & . & . & . & 0 \\ 
. &  & . &  &  & . \\ 
. &  &  & . &  & . \\ 
. &  &  &  & . & . \\ 
0 & . & . & . & 0 & g_{m-1} 
\end{array} 
\right] ,
\end{equation*} 
and afterwards using that 
$E=\gcd (g,\det(\Pi)=\prod_{i=0}^{m-1}g_{i}) =1$ 
we directly get the conclusion.
\end{proof}

\begin{corollary}
\label{cor:5.4} Let $g$, $g_{0}, g_{1}, \ldots, g_{m-1} \in \Bbbk[x]$ be as
in Corollary \ref{cor:5.3}. Then for any $f_{0}, f_{1}, \ldots,f_{m-1}\in 
\Bbbk[x]$ there exists unique $r$ of degree less than $mn$ such that 
\begin{equation*}
(g_{i}r)^{(i)}=f_{i}\mod g,~0 \leq i\leq m-1.
\end{equation*}
\end{corollary}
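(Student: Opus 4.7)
My plan is to reduce the statement directly to Theorem \ref{thm:5.1} by choosing a suitable matrix $\Pi$, so that the operators $r \mapsto (g_i r)^{(i)}$ become the rows of $\Pi$ applied to the column vector $(r, r', \ldots, r^{(m-1)})^T$.

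First, I would expand $(g_i r)^{(i)}$ by Leibniz's rule to get
\begin{equation*}
(g_i r)^{(i)} = \sum_{k=0}^{i} \binom{i}{k} g_i^{(i-k)} r^{(k)}.
\end{equation*}
This identifies the natural candidate for $\Pi \in (\Bbbk[x])^{m \times m}$: the lower-triangular matrix with entries $\pi_{ij} = \binom{i}{j} g_i^{(i-j)}$ for $0 \leq j \leq i \leq m-1$ and $\pi_{ij} = 0$ for $j > i$. The collection of conditions $(g_i r)^{(i)} \equiv f_i \pmod g$, $0 \leq i \leq m-1$, is then exactly the system
\begin{equation*}
\Pi \left[r, r', \ldots, r^{(m-1)}\right]^T \equiv \left[f_0, f_1, \ldots, f_{m-1}\right]^T \pmod g.
\end{equation*}

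Next I would compute $\det(\Pi)$. Since $\Pi$ is lower triangular with diagonal entries $\pi_{ii} = \binom{i}{i} g_i^{(0)} = g_i$, we have $\det(\Pi) = g_0 g_1 \cdots g_{m-1}$. By hypothesis each $g_i$ is coprime to $g$, so $\det(\Pi)$ is coprime to $g$, giving $E := \gcd(g, \det(\Pi)) = 1$. In particular $\det(\Pi) \neq 0$, so Theorem \ref{thm:5.1} applies.

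Finally, invoking Theorem \ref{thm:5.1} with this $\Pi$ and the given $f_0, \ldots, f_{m-1}$ produces a unique $r \in \Bbbk[x]$ of degree less than $mn$ such that $\Pi (r, r', \ldots, r^{(m-1)})^T \equiv E \cdot (f_0, \ldots, f_{m-1})^T = (f_0, \ldots, f_{m-1})^T \pmod g$, which by the Leibniz identification is exactly $(g_i r)^{(i)} \equiv f_i \pmod g$ for $0 \leq i \leq m-1$. Uniqueness of $r$ of degree less than $mn$ is inherited from Theorem \ref{thm:5.1}. The only mildly subtle point is checking that $\Pi$ is indeed lower triangular with the correct diagonal so that $\det(\Pi) = \prod g_i$; everything else is a direct translation of the hypotheses into the language of Theorem \ref{thm:5.1}.
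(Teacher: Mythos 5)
Your proposal is correct and follows essentially the same route as the paper: the authors also take $\Pi$ to be the lower-triangular Leibniz matrix with entries $\binom{i}{j}g_{i}^{(i-j)}$, observe that $\det(\Pi)=\prod_{i=0}^{m-1}g_{i}$ is coprime to $g$ so that $E=1$, and invoke Theorem \ref{thm:5.1}. Your write-up is in fact slightly more explicit than the paper's about identifying the rows of $\Pi$ with the operators $r\mapsto(g_{i}r)^{(i)}$.
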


\begin{proof}
Let $\Pi \in (\Bbbk[x])^{m \times m}$ 
be the matrix defined by 
\begin{equation}\label{eq:5.5}
\Pi =\left[ 
\begin{array}{cccccc}
g_{0} & 0 & . & . & . & 0 \\ 
\binom{1}{0}g_{1}^{( 1) } & \binom{1}{1}g_{1} & . & . & . & 0 \\ 
. & . & . &  &  & . \\ 
. & . &  & . &  & . \\ 
. & . &  &  & . & . \\ 
\binom{m-1}{0}g_{m-1}^{( m-1) } & \binom{m-1}{1}g_{m-1}^{(
m-2) } & . & . & . & \binom{m-1}{m-1}g_{m-1}^{( 0) } 
\end{array} 
\right] .
\end{equation} 
From the assumptions 
\begin{equation*}
(g_{i},g) =1,~0\leq i\leq m-1
\end{equation*}
we have 
\begin{equation}\label{eq:5.6}
(\det(\Pi),g)=1.
\end{equation} 
Applying Theorem \ref{eq:5.1} for $\Pi$, as given in (\ref{eq:5.5}), using  
(\ref{eq:5.6}) and the Leibnitz's rule, we get the conclusion.
\end{proof}

\end{document}